\theoremstyle{theorem}
\newtheorem{theorem}{Theorem}
\theoremstyle{definition}
\begin{document}

\begin{center}\Large
\textbf{Two Extensions of the Sury's Identity}
\bigskip\large

Ivica Martinjak\\
Faculty of Science, University of Zagreb\\
Bijeni\v cka cesta 32, HR-10000 Zagreb\\
Croatia\\
\end{center}

\noindent
The Fibonacci sequence, denoted by $\{F_n\}_{n \ge 0}$, is usually defined as the second order recurrence relation
\begin{eqnarray}  \label{FiboRec}
F_{n+2}=F_{n+1}+F_{n}, \enspace F_0=0, \enspace F_1=1.
\end{eqnarray}
Its close companions, the Lucas numbers $\{L_n\}_{n \ge 0}$, are defined with the same recurrence formula but with the initial values $L_0=2$ and $L_1=1$. 
Equivalently, these sequences could be defined as the only solutions of the Diophantine equation $x^2-5y^2=4(-1)^n$, $x=L_n$, $y=F_n$, $n \in \mathbb{N}_0$. The most elementary identity encompassing both sequences is 
\begin{eqnarray} \label{FiboLucasElem}
L_n=F_{n-1} + F_{n+1}.
\end{eqnarray}
Two different proofs of the relation
\begin{eqnarray} \label{suryId}
\sum_{k=0}^n 2^k L_k  = 2^{n+1 } F_{n+1}
\end{eqnarray}
were published recently \cite{kwong, sury}. It is also an immediate consequence of (\ref{FiboLucasElem}). We continue the series proving two related Fibonacci-Lucas identities, again using relations (\ref{FiboLucasElem}) and (\ref{FiboRec}).

\begin{theorem} \label{FirstThm}
For the Fibonacci and Lucas sequence
\begin{eqnarray}
\sum_{k=0}^n (-1)^k 2^{n-k} L_{k+1} = (-1)^{n} F_{n+1}.
\end{eqnarray}
\end{theorem}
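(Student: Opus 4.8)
The plan is to proceed by induction on $n$, with the elementary relation (\ref{FiboLucasElem}) and the recurrence (\ref{FiboRec}) supplying the only arithmetic needed in the inductive step. Write $S_n = \sum_{k=0}^n (-1)^k 2^{n-k} L_{k+1}$ for the left-hand side. The base case $n=0$ is immediate: $S_0 = L_1 = 1 = F_1$, which agrees with $(-1)^0 F_1$.

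For the inductive step I would isolate the top term of $S_{n+1}$ and recognize the remaining sum as a rescaled copy of $S_n$. Splitting off $k=n+1$ and pulling a factor of $2$ out of the rest gives
\[
S_{n+1} = 2\sum_{k=0}^{n}(-1)^k 2^{n-k} L_{k+1} + (-1)^{n+1} L_{n+2} = 2 S_n + (-1)^{n+1} L_{n+2}.
\]
Substituting the induction hypothesis $S_n = (-1)^n F_{n+1}$ and factoring out $(-1)^{n+1}$ (using $(-1)^n = -(-1)^{n+1}$) reduces the entire claim to the single scalar identity $L_{n+2} - 2F_{n+1} = F_{n+2}$.

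The crux of the argument is therefore this last identity, and it is dispatched using exactly the two tools the paper highlights. By (\ref{FiboLucasElem}) we have $L_{n+2} = F_{n+1} + F_{n+3}$, so $L_{n+2} - 2F_{n+1} = F_{n+3} - F_{n+1}$, and by the recurrence (\ref{FiboRec}) this difference equals $F_{n+2}$. I do not anticipate a genuine obstacle; the only place demanding care is the bookkeeping of the alternating sign when passing from $2S_n$ to the $(-1)^{n+1}$ normalization, which is why I would factor the sign out explicitly before invoking the hypothesis.

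As an alternative, closer in spirit to the telescoping proofs of (\ref{suryId}), I would observe that the sum collapses directly. Setting $T_k = (-1)^{k-1} 2^{n-k+1} F_k$, relations (\ref{FiboLucasElem}) and (\ref{FiboRec}) combine to give $T_{k+1} - T_k = (-1)^k 2^{n-k} L_{k+1}$, whence $S_n = T_{n+1} - T_0 = (-1)^n F_{n+1}$, the boundary term $T_0$ vanishing because $F_0 = 0$. Here the one nonroutine step is guessing the correct telescoping term $T_k$; once its form is fixed, the verification is purely mechanical.
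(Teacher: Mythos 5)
Your proposal is correct on both counts, and your two routes sit on opposite sides of the paper's own argument. The paper's proof is exactly your ``alternative'': it expands $L_{k+1} = F_k + F_{k+2} = 2F_k + F_{k+1}$ using (\ref{FiboLucasElem}) and (\ref{FiboRec}) and lets the sum collapse pairwise, leaving only $(-1)^n F_{n+1}$ --- this is precisely your telescoping with $T_k = (-1)^{k-1}2^{n-k+1}F_k$, just written out term by term rather than packaged as a difference. Your primary route, induction on $n$, is genuinely different in organization: splitting off the top term gives $S_{n+1} = 2S_n + (-1)^{n+1}L_{n+2}$, and the whole burden falls on the scalar identity $L_{n+2} - 2F_{n+1} = F_{n+2}$, which you verify correctly from $L_{n+2}=F_{n+1}+F_{n+3}$ and the recurrence. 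What the induction buys is that nothing needs to be guessed --- the reduction is forced and the verification is mechanical; what the telescoping buys is a proof that is really a single identity-level observation, and it is the style that extends naturally to (\ref{suryId}) and to Theorem \ref{secondThm}, which the paper says is ``proved in the same fashion.'' Your sign bookkeeping is right in both versions (in particular $T_0 = 0$ because $F_0 = 0$, and the base case $S_0 = L_1 = F_1$ holds), so there is no gap to repair.
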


\begin{proof}
\begin{eqnarray*}
&  2^n&L_1-2^{n-1}L_2+2^{n-2}L_3- \cdots (-1)^nL_{n+1} =\\
&=& 2^n(F_0+F_2)-2^{n-1}(F_1+F_3)+ 2^{n-2}(F_2+F_4)- \cdots (-1)^n(F_n+F_{n+2})\\
&=& 2^n(2F_0+F_1) - 2^{n-1} (2F_1+F_2) + 2^{n-2}(2F_2+F_3)- \cdots (-1)^n(2F_n+F_{n+1})\\
&=& (2^nF_1-2^nF_1) +(-2^{n-1}F_2 + 2^{n-1}F_2) + (2^{n-2}F_3 - 2^{n-2}F_3) + \cdots (-1)^nF_{n+1}\\
&=& (-1)^nF_{n+1}
\end{eqnarray*}
\end{proof}

Theorem \ref{secondThm} is proved in the same fashion. It provides the answer whether there is an identity involving the product $3^nL_n$. Furher generalizations are also possible.

\begin{theorem}  \label{secondThm}
For the Fibonacci and Lucas sequence
\begin{eqnarray} \label{secondThmId}
\sum_{k=0}^n 3^k (L_k + F_{k+1}) = 3^{n+1 } F_{n+1}.
\end{eqnarray}
\end{theorem}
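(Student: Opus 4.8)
The plan is to follow the same strategy as in Theorem~\ref{FirstThm}: rewrite the summand $3^k(L_k+F_{k+1})$ by means of (\ref{FiboLucasElem}) and (\ref{FiboRec}) until it becomes a difference of two consecutive terms of a single sequence, so that the whole sum collapses. The first step is to simplify $L_k+F_{k+1}$. Starting from (\ref{FiboLucasElem}) written as $L_k=F_{k-1}+F_{k+1}$ and using the recurrence in the form $F_{k-1}=F_{k+1}-F_k$, I would obtain
\begin{eqnarray*}
L_k+F_{k+1}=F_{k-1}+2F_{k+1}=(F_{k+1}-F_k)+2F_{k+1}=3F_{k+1}-F_k.
\end{eqnarray*}
Equivalently this says $L_k=2F_{k+1}-F_k$, an identity one checks directly at $k=0$ as well, so no negative-index convention is needed.

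With this substitution the coefficient $3^k$ does the rest. Multiplying through, the summand becomes $3^k(3F_{k+1}-F_k)=3^{k+1}F_{k+1}-3^kF_k$, and therefore
\begin{eqnarray*}
\sum_{k=0}^n 3^k(L_k+F_{k+1})=\sum_{k=0}^n\left(3^{k+1}F_{k+1}-3^kF_k\right).
\end{eqnarray*}
This is a telescoping sum $\sum_{k=0}^n(c_{k+1}-c_k)$ with $c_k=3^kF_k$, so all interior contributions cancel in consecutive pairs and only the two boundary terms remain, giving $c_{n+1}-c_0=3^{n+1}F_{n+1}-F_0$. Since $F_0=0$, the right-hand side is $3^{n+1}F_{n+1}$, which is exactly (\ref{secondThmId}). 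If one prefers the display style of the proof of Theorem~\ref{FirstThm}, the same computation can be laid out term by term as $(3F_1-F_0)+(3^2F_2-3F_1)+\cdots+(3^{n+1}F_{n+1}-3^nF_n)$ and the cancellation performed along the diagonals.

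The only genuinely delicate point is the first rewriting: among the several ways (\ref{FiboLucasElem}) and (\ref{FiboRec}) let one expand $L_k+F_{k+1}$, I must select the one that exposes the telescoping structure most directly, namely $L_k+F_{k+1}=3F_{k+1}-F_k$, because then the summand is visibly $3^{k+1}F_{k+1}-3^kF_k=c_{k+1}-c_k$. Other correct expansions, such as $L_k+F_{k+1}=3F_{k-1}+2F_k$, describe the same quantity and hence telescope just as well, but they require an extra application of the recurrence to be recognized as a difference of consecutive $c_k$; so the real content of the proof is spotting the clean form. After that the argument is pure bookkeeping of the shifted powers of $3$, and the factor $3^{n+1}$ on the right is supplied exactly by the surviving top term $c_{n+1}=3^{n+1}F_{n+1}$, while the vanishing of $c_0=F_0=0$ removes the only other boundary contribution.
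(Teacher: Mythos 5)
Your proof is correct and is exactly the ``same fashion'' the paper has in mind: rewrite the summand via (\ref{FiboLucasElem}) and (\ref{FiboRec}) so that $3^k(L_k+F_{k+1})=3^{k+1}F_{k+1}-3^kF_k$ and telescope, just as the proof of Theorem~\ref{FirstThm} does for its sum. Your explicit check of $L_k=2F_{k+1}-F_k$ at $k=0$ neatly sidesteps the negative-index issue, so nothing is missing.
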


\end{document}